\theoremstyle{remark}{
\newtheorem{Def}{{\rm Definition}}
\newtheorem{Ex}{{\rm Example}}

}
\newtheorem{Prop}{Proposition}
\newtheorem{Thm}{Theorem}
\newtheorem{MThm}{Main Theorem}
\begin{document}
\title[New fold maps on 7-dimensional closed and simply-connected manifolds]{New explicit construction of fold maps on general 7-dimensional closed and simply-connected spin manifolds}
\author{Naoki Kitazawa}
\keywords{Singularities of differentiable maps; fold maps. Differential structures. Higher dimensional closed and simply-connected manifolds.\\
\indent {\it \textup{2020} Mathematics Subject Classification}: Primary~57R45. Secondary~57R19.}

\address{Institute of Mathematics for Industry, Kyushu University, 744 Motooka, Nishi-ku Fukuoka 819-0395, Japan\\
 TEL (Office): +81-92-802-4402 \\
 FAX (Office): +81-92-802-4405 \\
}
\email{n-kitazawa@imi.kyushu-u.ac.jp}
\urladdr{https://naokikitazawa.github.io/NaokiKitazawa.html}
\maketitle
\begin{abstract}
$7$-dimensional closed and simply-connected manifolds have been attractive as central and explicit objects in algebraic topology and differential topology of higher dimensional closed and simply-connected manifolds, which were studied actively especially in 1950s--60s.  

Attractive studies of the class of these $7$-dimensional manifolds were started by the discovery of so-called {\it exotic} spheres by Milnor. It has influenced on the understanding of higher dimensional closed and simply-connected manifolds via algebraic and abstract objects. Recently this class is still being actively studied via more concrete notions from algebraic topology such as concrete bordism theory by Crowley, Kreck, and so on. 

As a new kind of fundamental and important studies, the author has been challenging understanding the class in constructive ways via construction of {\it fold} maps, which are higher dimensional versions of Morse functions.
The present paper presents a new general method to construct ones on spin manifolds of the class. 

\end{abstract}
\section{Introduction.}
\label{sec:1}

Closed (and simply-connected) manifolds whose dimensions are $m \geq 5$ have been central objects in algebraic topology and differential topology around the 1950s--60s. They have been classified via algebraic and abstract objects. 
$7$-dimensional closed and simply-connected manifolds were explicit and central objects in this scene as the Milnor's discovery of 7-dimensional {\it exotic} spheres \cite{milnor} and a related work \cite{eellskuiper} show and the class has been attractive as \cite{crowleyescher}, \cite{crowleynordstrom}, \cite{kreck}, and so on, show: an {\it exotic} (homotopy) sphere is a homotopy sphere which is not diffeomorphic to any standard sphere.

\subsection{Terminologies and notation on singularities of differentiable maps and fold maps.}
\label{subsec:1.1}
A {\it singular} point $p \in X$ of a differentiable map $c:X \rightarrow Y$ between two differentiable manifolds is a point at which the rank of the differential ${dc}_p$ of the map is smaller than both the dimensions $\dim X$ and
 $\dim Y$. $S(c)$ denotes the set of all singular points of $c$ and this is the {\it singular set} of $c$. $c(S(c))$ is che {\it singular value set} of $c$. and $Y-c(S(c))$ is the {\it regular value set} of $c$. A {\it singular {\rm (}regular{\rm )} value} is a point in the singular (resp. regular) value set of $c$.

Throughout the present paper, manifolds and maps between manifolds are smooth (of class $C^{\infty}$) unless otherwise stated. 
\begin{Def}
\label{def:1}
Let $m>n \geq 1$ be integers. 
A smooth map $f$ from an $m$-dimensional smooth manifold with no boundary into an $n$-dimensional smooth manifold with no boundary is said to be a {\it fold} map if at each singular point $p$, the map has the form
$(x_1, \cdots, x_m) \mapsto (x_1,\cdots,x_{n-1},\sum_{k=n}^{m-i}{x_k}^2-\sum_{k=m-i+1}^{m}{x_k}^2)$ for suitable coordinates and an integer $0 \leq i(p) \leq \frac{m-n+1}{2}$.
\end{Def}

\begin{Prop}
In the situation of Definition \ref{def:1}, the following properties hold.
\label{prop:1}
\begin{enumerate}
\item For any singular point $p$, $i(p)$ is unique. 
\item The set consisting of all singular points of a fixed index of the map is a closed submanifold of dimension $n-1$ with no boundary of the $m$-dimensional manifold. 
\item The restriction map to the singular set is an immersion.
\end{enumerate}
\end{Prop}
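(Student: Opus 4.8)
The plan is to read off all three assertions from the local normal form in Definition \ref{def:1}, doing genuine work only for the coordinate-independence in part (1). I would begin by fixing a singular point $p$ and writing $f$ in normal-form coordinates $(x_1,\dots,x_m)$ about $p$, so that $f=(x_1,\dots,x_{n-1},Q)$ with $Q=\sum_{k=n}^{m-i}x_k^2-\sum_{k=m-i+1}^{m}x_k^2$. Computing the Jacobian, the first $n-1$ rows form $[I_{n-1}\mid 0]$ and the last row is $(0,\dots,0,2x_n,\dots,2x_{m-i},-2x_{m-i+1},\dots,-2x_m)$; hence the rank equals $n$ except exactly where $x_n=\dots=x_m=0$, where it drops to $n-1$. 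Thus near $p$ the singular set $S(f)$ is the $(n-1)$-plane $\{x_n=\cdots=x_m=0\}$, parametrized by $(x_1,\dots,x_{n-1})$.

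This local picture settles parts (2) and (3). For part (2), the plane $\{x_n=\cdots=x_m=0\}$ is a submanifold of dimension $n-1$; moreover every point $q=(a_1,\dots,a_{n-1},0,\dots,0)$ of it inherits the same normal form with the same $i$ after translating, so the index is locally constant along $S(f)$. Since $S(f)$ is closed in $M$ (rank being lower semicontinuous, the locus where it is $\le n-1$ is closed), the index-$i$ locus is open and closed in $S(f)$, hence a closed $(n-1)$-dimensional submanifold without boundary. For part (3), restricting $f$ to $\{x_n=\cdots=x_m=0\}$ makes $Q$ vanish and leaves $(x_1,\dots,x_{n-1})\mapsto(x_1,\dots,x_{n-1},0)$, whose differential has rank $n-1=\dim S(f)$; so $f|_{S(f)}$ is an immersion.

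The substantive step is part (1): the normal form is only asserted ``for suitable coordinates'', so I must show $i(p)$ does not depend on the chart. Here I would pass to an intrinsic object. At a fold point $\mathrm{rank}\,df_p=n-1$, so $K:=\ker df_p$ has dimension $m-n+1$ and the cokernel $C:=T_{f(p)}N/\mathrm{im}\,df_p$ is $1$-dimensional. I would define the intrinsic Hessian of $f$ as a symmetric quadratic form $H\colon K\to C$: for $v,w\in K$ extend them to vector fields $V,W$ near $p$, form $W(V(f))$ in local target coordinates, and project to $C$, verifying in the standard way that the first-order terms drop out because $v,w\in K$, so that $H$ is well defined independently of the extensions and of the chosen coordinates. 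In the normal-form chart $K$ is spanned by $\partial_{x_n},\dots,\partial_{x_m}$ and $C$ is spanned by the last target direction, and $H$ is exactly $Q|_K$, which is nondegenerate with $m-n+1-i$ positive and $i$ negative squares. By Sylvester's law of inertia the unordered pair (number of positive, number of negative) squares of $H$ is a coordinate-free invariant; a change of generator of the $1$-dimensional space $C$ can only swap the two counts, so the smaller of them is unambiguous. Since the constraint $i\le\frac{m-n+1}{2}$ forces $i$ to equal precisely this smaller count, $i(p)$ is uniquely determined.

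I expect part (1) to be the only real obstacle, specifically the verification that $H$ is well defined as a map $K\to C$ (the vanishing of the first-order contribution and the independence of the vector-field extensions); parts (2) and (3) are immediate bookkeeping from the normal form.
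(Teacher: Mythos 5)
The paper states Proposition \ref{prop:1} without proof, treating it as a standard fact from the singularity theory of differentiable maps, so there is no in-paper argument to compare against; your write-up supplies exactly the standard justification and is correct. The Jacobian computation identifying $S(f)$ locally with the plane $\{x_n=\cdots=x_m=0\}$ settles (2) and (3), and the intrinsic Hessian $K\to C$ on the kernel of $df_p$, combined with Sylvester's law and the observation that a change of generator of the one-dimensional cokernel can only swap the positive and negative counts, is the right way to get the coordinate-independence of $i(p)$ in (1); the normalization $i\le\frac{m-n+1}{2}$ then pins $i$ down as the smaller count. Two small remarks: first, your part (2) quietly uses part (1) (local constancy of the index along $S(f)$ presupposes that the index is well defined), so the logical order should put (1) first; second, in checking well-definedness of $H$, the correction terms coming from changing the extension of $V$ vanish because they land in $\mathrm{im}\,df_p$ and are killed by the projection to $C$, while the hypothesis $v\in K$ is what is genuinely needed for invariance under target coordinate changes --- your sketch conflates these two points slightly, but both verifications go through. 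Neither issue is a gap.
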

We call {$i(p)$ in Proposition \ref{prop:1} the {\it index} of $p$. A {\it special generic} map is a fold map such that the index of each singular point is $0$.
 The class of special generic maps contains all Morse functions on closed manifolds with exactly two singular points, which are central objects in so-called Reeb's theorem, characterizing spheres topologically except the case where the manifold is $4$-dimensional. A $4$-dimensional closed manifold is diffeomorphic to $S^4$ if and only if it admits such a function. The class of special generic maps also contains all canonical projections of unit spheres. 
Fold maps have been fundamental and strong tools in studying algebraic topological, differential topological and more general algebraic or geometric properties of manifolds
 in the branch of the singularity theory of differentiable maps as Morse functions have been in so-called Morse theory.
\cite{thom} and \cite{whitney} are pioneering studies and on smooth maps on manifolds whose dimensions are equal to or greater than $2$ into the plane. After various studies, recently, Saeki, Sakuma and so on, have found interesting facts on fold maps satisfying appropriate conditions. Studies on special generic maps and manifolds admitting them are in \cite{saeki}, \cite{saeki2}, \cite{saekisakuma}, \cite{saekisakuma2}, \cite{sakuma}, \cite{sakuma2}, and so on. These studies also motivate the author to present \cite{kitazawa}, \cite{kitazawa2}, \cite{kitazawa3}, \cite{kitazawa4}, and so on,

\subsection{$7$-dimensional closed and simply-connected manifolds of several classes and explicit fold maps on them.} 
\label{subsec:1.2}
We assume that diffeomorphisms on manifolds are smooth. We define the {\it diffeomorphism group} of a manifold as the group of all diffeomorphisms on the manifold. Unless otherwise stated, the structure groups of bundles whose fibers are manifolds are subgroups of the diffeomorphism groups. In other words the bundles are {\it smooth} bundles.
\begin{Def}
\label{def:2}
For a fold map $f:M \rightarrow {\mathbb{R}}^n$ on a closed and connected manifold $M$, we also assume that $f {\mid}_{S(f)}$ is an embedding and that for each connected component $C$ of the singular value set and its small closed tubular neighborhood $N(C)$, the composition of $f {\mid}_{f^{-1}(N(C))}:f^{-1}(N(C)) \rightarrow N(C)$ with a canonical projection to $C$ gives a trivial bundle over $C$. In this situation we say that $f$ is {\it S-trivial}. 
\end{Def}
\begin{Thm}[\cite{kitazawa7} and \cite{kitazawa8}.]
\label{thm:1}
Let $A$, $B$ and $C$ be free commutative groups of rank $a$,$b$ and $c$.
Let $\{a_{i,j}\}_{j=1}^{a}$ be a sequence of integers where $1 \leq i \leq b$ is an integer. Let $p \in B \oplus C$.
Let $(h_{i,j})$ be a symmetric $b \times b$ matrix such that the $(i,j)$-th component is an integer and that the diagonal elements are $0$. In this situation, there exist a $7$-dimensional closed and simply-connected spin manifold and a fold map $f:M \rightarrow {\mathbb{R}^4}$ such that the following properties hold.
\begin{enumerate}
\item $H_{\ast}(M;\mathbb{Z})$ is free. Isomorphisms $H^2(M;\mathbb{Z}) \cong A \oplus B$ and $H^4(M;\mathbb{Z}) \cong B \oplus C$ hold and by fixing suitable identifications we have the following properties. 
\begin{enumerate}
\item Products of elements in $A \oplus \{0\} \subset H^2(M;\mathbb{Z})$ vanish.
\item Consider a suitable basis $\{({a_j}^{\ast},0)\}_{j=1}^{a}$ of $A \oplus \{0\} \subset H^2(M;\mathbb{Z})$ and a suitable basis $\{(0,{b_j}^{\ast})\}_{j=1}^{b}$ of $\{0\} \oplus B \subset H^2(M;\mathbb{Z})$. The product of $({a_{j_1}}^{\ast},0)$ and $(0,{b_{j_2}}^{\ast})$ is regarded as $(a_{j_2,j_1}{b_{j_2}}^{\ast},0) \in B \oplus \{0\} \subset H^4(M;\mathbb{Z})$.
The product of $(0,{b_{j_1}}^{\ast})$ and $(0,{b_{j_2}}^{\ast})$ is regarded as $(h_{j_1,j_2}{b_{j_1}}^{\ast}+h_{j_2,j_1}{b_{j_2}}^{\ast},0) \in H^4(M;\mathbb{Z})$.
\end{enumerate}
\item The 1st Pontryagin class of $M$ is $4p \in H^4(M;\mathbb{Z})$ where the identification before is considered.
\item The index of each singular point of $f$ is always at most $1$ and preimages of regular values are disjoint unions of at most $3$ copies of $S^3$. Furthermore, if $(h_{i,j})$ is the zero matrix, then we can construct this map $f$ as
 an S-trivial map such that preimages of regular values are disjoint unions of at most $2$ copies of $S^3$.
\end{enumerate} 

\end{Thm}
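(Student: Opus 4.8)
The plan is to realize $M$ as the total space of an explicit radially--organized family of $3$-spheres over a controlled region of ${\mathbb{R}}^4$, built by splicing together elementary local fold maps that carry the individual generators and products, and then to read off $H^{\ast}(M;\mathbb{Z})$, its ring structure, and $p_1(M)$ from the assembling data. Concretely, I would first decompose the target ${\mathbb{R}}^4$ into nested regions separated by the concentric $3$-spheres of some ${D_{\mathbb{N},r}}^4$, declare a single trivial $S^3$-fiber over the innermost ball, and prescribe across each separating sphere either an index-$0$ fold (which caps off or creates one $S^3$-component, exactly as in a special generic map) or an index-$1$ fold (which performs a $0$-surgery merging or splitting two $S^3$-components). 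By Proposition~\ref{prop:1} the resulting $f$ is a genuine fold map whose singular set over each sphere is a closed $3$-dimensional submanifold, and the region/transition pattern is chosen so that every regular fiber is a disjoint union of at most $3$ copies of $S^3$.

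The generators of $A\oplus B\cong H^2(M;\mathbb{Z})$ and of $B\oplus C\cong H^4(M;\mathbb{Z})$ are then produced by prescribing the clutching (structure-group) data of these $S^3$-families over the separating $3$-spheres and over the $4$-cycles arising as their doubles; here the identification $\pi_3(SO(4))\cong\mathbb{Z}^2$ lets me tune an Euler-type class and a Pontryagin number independently. I would next compute $H^{\ast}(M;\mathbb{Z})$ together with its ring structure by a Mayer--Vietoris and Gysin argument along the radial decomposition, establishing freeness, the two stated isomorphisms, and the fact that the cup products are governed by the linking of the prescribed families. In this computation the vanishing $({a_{j_1}}^{\ast},0)\cdot({a_{j_2}}^{\ast},0)=0$ is automatic because the $A$-families are unlinked; the mixed products are arranged to equal $(a_{j_2,j_1}{b_{j_2}}^{\ast},0)$ by fixing the relevant linking numbers to be the prescribed integers $a_{i,j}$; and the quadratic products $(0,{b_{j_1}}^{\ast})\cdot(0,{b_{j_2}}^{\ast})=(h_{j_1,j_2}{b_{j_1}}^{\ast}+h_{j_2,j_1}{b_{j_2}}^{\ast},0)$ are realized precisely by the index-$1$ folds, whose surgery coefficients encode the off-diagonal entries $h_{i,j}$, with the hypothesis $h_{ii}=0$ forcing the self-products to vanish.

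The Pontryagin class is controlled by the bundle data above, and I would verify $p_1(M)=4p$ by the standard formula for the first Pontryagin class of a linear $S^3$-bundle, matching $p\in B\oplus C$ component by component; the factor $4$ reflects exactly the admissible clutching maps together with the spin condition, which cut the achievable values down to $4H^4(M;\mathbb{Z})$. Simple-connectivity follows since both the base region and the $S^3$-fibers are simply-connected and the index-$1$ transitions create no $\pi_1$, while the spin condition $w_2(M)=0$ is secured by keeping the relevant mod-$2$ linking data even. Finally I would separate the two regimes: when $(h_{i,j})$ is the zero matrix no index-$1$ merging is needed to realize the now-vanishing quadratic products, so the pattern can be taken S-trivial in the sense of Definition~\ref{def:2} with every regular fiber at most $2$ copies of $S^3$; in the general case the index-$1$ folds carrying the $h_{i,j}$-data raise the maximal fiber count to $3$.

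The hard part will be carrying out the middle two steps \emph{simultaneously}: arranging the clutching and surgery data so that all of the prescribed integers $a_{i,j}$ and $h_{i,j}$ \emph{and} the class $4p$ are realized at once, while never letting a regular fiber exceed the allowed number of $S^3$-components and keeping the spliced object a single well-defined fold map rather than a collection of unrelated local models. Reconciling these linking-theoretic constraints with the sharp bound on the fiber, and checking that the Mayer--Vietoris computation is unaffected by the choices made to enforce them, is where I expect the main work to lie.
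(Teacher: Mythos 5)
First, note that the paper does not prove Theorem~\ref{thm:1} at all: it is imported from \cite{kitazawa7} and \cite{kitazawa8}, and the only constructions actually carried out here (for Theorem~\ref{thm:5}) start from a special generic map as in Example~\ref{ex:2}, whose image is a boundary connected sum of manifolds $S^{l_j}\times D^{4-l_j}$ with $l_j\in\{1,2,3\}$, and then modify it by surgery (``normal bubbling'') along embedded submanifolds of the image representing prescribed homology classes. Your proposal takes a genuinely different route --- a ``round'' construction whose singular value set is the union of concentric $3$-spheres ${D_{\mathbb{N},r}}^4$, with the manifold assembled radially from $S^3$-families and analysed by Mayer--Vietoris and clutching data in $\pi_3(SO(4))$.

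That route has a fatal gap: it cannot produce the required $H^2(M;\mathbb{Z})\cong A\oplus B$ when $a+b>0$. With singular values confined to concentric $3$-spheres and all regular fibers disjoint unions of copies of $S^3$, your manifold is glued from total spaces of $S^3$-bundles over $D^4$, over $S^3\times[0,1]$, and over the fold-transition collars, with overlaps that are $S^3$-bundles over $S^3$; every piece and every overlap is simply-connected with vanishing second cohomology, so Mayer--Vietoris forces $H^2(M;\mathbb{Z})=0$ (this is consistent with Theorem~\ref{thm:2}, where exactly this kind of map is shown to live on homotopy spheres). To carry degree-$2$ classes the image of the map must itself have nontrivial $H_1$ and $H_2$ --- which is precisely why Example~\ref{ex:2} uses images containing $S^1\times D^3$ and $S^2\times D^2$ summands --- and the cup products $a_{j_2,j_1}$, $h_{j_1,j_2}$ are then encoded by the homology classes of the submanifolds along which one performs the bubbling operations (Proposition~\ref{prop:2}), not by linking of radially nested $S^3$-families. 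Separately, your treatment of $p_1(M)=4p$ is circular as written: asserting that ``the factor $4$ reflects exactly the admissible clutching maps together with the spin condition'' restates the conclusion rather than deriving it from the linear $D^4$-bundle data over the $4$-dimensional Reeb space, and $M$ is not globally an $S^3$-bundle, so the ``standard formula'' you invoke does not directly apply.
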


We explain several facts implying explicitly that for understanding classes of $7$-dimensional closed and simply-connected spin manifolds in more geometric and constructive ways, fold maps into ${\mathbb{R}}^4$ are interesting.

It is known that there exist exactly $28$ types of $7$-dimensional oriented homotopy spheres (see \cite{milnor} and see also \cite{eellskuiper}). 

Hereafter, for $p$ in the Euclidean space ${\mathbb{R}}^k$, $||p||$ denotes the distance between the origin $0$ and $p$ where the underlying space is endowed with the Euclidean metric. For positive integers $k$ and $r$, ${D_{\mathbb{N},r}}^k$ denotes the set $\{x \in {\mathbb{R}}^k \mid ||x|| \in \mathbb{N}, 1 \leq ||x|| \leq r\}$.

\begin{Thm}[\cite{kitazawa2} and so on.]
\label{thm:2}
Every $7$-dimensional homotopy sphere admits an S-trivial fold map $f$ into ${\mathbb{R}}^4$ satisfying the following properties. 

\begin{enumerate}
\item $f(S(f))={D_{\mathbb{N},3}}^4$.
\item The index of each singular point is always $0$ or $1$.
\item For each connected component of the regular value set of $f$, the preimage of a regular value is, empty, diffeomorphic to $S^3$, diffeomorphic to $S^3 \sqcup S^3$ and diffeomorphic to $S^3 \sqcup S^3 \sqcup S^3$, respectively.
\end{enumerate}
Moreover, we can show the following two facts.
\begin{enumerate}
\item $M$ admits an S-trivial fold map $f$ into ${\mathbb{R}}^4$ satisfying the second property of the previous three properties and the first property replaced by $f(S(f))={D_{\mathbb{N},1}}^4$ if and only if $M$ is a standard sphere.
\item $M$ admits an S-trivial fold map $f$ into ${\mathbb{R}}^4$ satisfying the second property of the previous three properties and the first property replaced by $f(S(f))={D_{\mathbb{N},2}}^4$ if and only if the homotopy sphere $M$ is oriented and one of 16 types of the 28 types, where the standard sphere is one of the 16 types. Furthermore, the third propertuy is replaced by the following: for each connected component of the regular value set of $f$, the preimage of a regular value is, empty, diffeomorphic to $S^3$, and diffeomorphic to $S^3 \sqcup S^3$, respectively.
\end{enumerate}
\end{Thm}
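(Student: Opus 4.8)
The plan is to treat the three assertions in increasing order of difficulty, using throughout that $\Theta_7 \cong bP_8 \cong \mathbb{Z}/28\mathbb{Z}$, so that every $7$-dimensional homotopy sphere $\Sigma$ bounds a parallelizable $8$-manifold and is determined by its Eells--Kuiper invariant (\cite{milnor}, \cite{eellskuiper}). The structural observation I would exploit is that for a fold map of the prescribed type the radial function $x \mapsto \lVert f(x) \rVert$ behaves like a Morse--Bott function, so the preimages $f^{-1}(\{\lVert y \rVert \le t\})$ filter $\Sigma$ and each concentric singular sphere contributes one building block: by Definition \ref{def:1} an index-$0$ (definite) fold over a sphere caps a fibre $S^3$ by a trivial $D^4$-bundle, where $S$-triviality forces the product, while an index-$1$ fold performs a fibrewise surgery. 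I would first record, from Proposition \ref{prop:1}, that the index is constant on each connected component of $S(f)$, so over a single concentric sphere the fold has a well-defined index.

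For the reverse implication of the first supplementary fact I would argue that a fold map whose singular value set is the single sphere ${D_{\mathbb{N},1}}^4$ has connected fold locus, hence constant index, and the empty preimage on one side forces that index to be $0$; thus $f$ is special generic. By the structure theorem for special generic maps (\cite{saeki}), $\Sigma$ is then the boundary of the total space of a linear $D^4$-bundle over the contractible image disc, which is trivial, so $\Sigma = \partial(D^4 \times D^4) = S^7$ is standard. The forward implication is realized by the canonical projection $S^7 \subset \mathbb{R}^4 \times \mathbb{R}^4 \to \mathbb{R}^4$, which is $S$-trivial with image $D^4$.

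For the two-sphere fact I would show that the fold structure forces $\Sigma$ to be assembled from two $D^4$-bundles over $S^3$ glued across a middle product region $S^3 \times S^3 \times I$, i.e. from the total space of an $S^3$-bundle over $S^4 = D^4 \cup_{S^3} D^4$; here $S$-triviality makes each piece a product, and all of the homotopy-sphere (hence exotic) information is carried by the clutching in $\pi_3(SO(4))$. Realizability then reduces to the classical classification of $S^3$-bundles over $S^4$ whose total space is a homotopy sphere, which yields exactly $16$ of the $28$ types with the standard sphere among them (\cite{eellskuiper}, \cite{crowleyescher}); conversely each such bundle produces the desired two-sphere $S$-trivial map. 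For the main three-sphere assertion I would then build, for each residue in $\mathbb{Z}/28\mathbb{Z}$, a three-level round fold map by prescribing the clutching data of the two definite caps together with one index-$1$ fold, so that the resulting closed manifold is a homotopy sphere realizing the prescribed Eells--Kuiper invariant.

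The hard part will be this last step: verifying that the third level together with the index-$1$ fold genuinely enlarges the realizable set from the $16$ bundle types to all $28$, and computing the invariant of the constructed manifolds. Concretely, I expect the main obstacle to be the Eells--Kuiper (equivalently, Pontryagin-number) computation on an explicit parallelizable coboundary extracted from the Reeb space of $f$, and showing that the clutching and surgery parameters can be chosen so that this invariant realizes every element of $\mathbb{Z}/28\mathbb{Z}$. Checking that the constructions stay $S$-trivial fold maps with indices only $0$ and $1$ and preimages at most $S^3 \sqcup S^3 \sqcup S^3$ reduces to the local normal forms of Definition \ref{def:1}, but the arithmetic matching of the invariant to the $28$ (respectively $16$, $1$) diffeomorphism types is the genuine content.
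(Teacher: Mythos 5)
First, a remark on the comparison itself: the paper offers no proof of Theorem \ref{thm:2} --- it is imported from \cite{kitazawa2} and the related work on round fold maps (\cite{kitazawa}, \cite{kitazawa3}, \cite{kitazawa4}) --- so there is no internal argument to measure you against, and your outline must stand on its own. In spirit it is the right outline: you filter $M$ by preimages of concentric balls and read off a bundle decomposition level by level. Your treatment of the first supplementary fact is essentially complete and correct: since $f{\mid}_{S(f)}$ is an embedding into a single $3$-sphere, $S(f)$ is connected and of constant index by Proposition \ref{prop:1}, the empty outer preimage forces that index to be $0$, and Saeki's structure theorem \cite{saeki} over the contractible image $D^4$ yields $M\cong\partial(D^4\times D^4)=S^7$. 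The two-sphere case is also aimed at the correct target (total spaces of $S^3$-bundles over $S^4$ and the classical count of $16$ realizable classes from \cite{eellskuiper}, \cite{crowleyescher}), though the ``only if'' direction still requires you to actually carry out the gluing analysis --- in particular to justify that the $S^3$-bundle over the middle annulus has linear structure group so that the clutching datum really lives in ${\pi}_3(SO(4))$ --- rather than assert the identification.

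The genuine gap is the main assertion, and you flag it yourself: that \emph{every} one of the $28$ oriented diffeomorphism classes admits a three-level map is the actual content of the theorem, and your proposal reduces it to ``choose the clutching and surgery parameters so that the Eells--Kuiper invariant realizes every element of $\mathbb{Z}/28\mathbb{Z}$'' without exhibiting the construction or performing the computation. Nothing in the outline explains \emph{why} one additional level enlarges the realizable set from $16$ classes to all $28$: for instance, if the three-level manifolds turn out to be connected sums of two bundle-type homotopy spheres, one must still verify that the sumset of the $16$ realized residues with itself is all of $\mathbb{Z}/28\mathbb{Z}$, and if an index-$1$ level is genuinely used one must compute the invariant of the resulting manifold on an explicit parallelizable coboundary. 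Some such concrete argument is indispensable --- it is precisely what \cite{kitazawa2} supplies --- so the proposal as written is a correct reduction together with an honest admission that the decisive step is missing, not a proof.
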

We review known results on special generic maps on homotopy spheres. As a specific case, if a homotopy sphere of dimension $7$ admits a special generic map into ${\mathbb{R}}^4$, then it is diffeomorphic to a standard sphere.
\begin{Thm}
\label{thm:3}
{\rm (}\cite{calabi}, \cite{saeki}, \cite{saeki2}, \cite{wrazidlo} and so on.{\rm )} Every exotic homotopy sphere of dimension $m>3$ admits no special generic map into ${\mathbb{R}}^{k}$ for $k=m-3,m-2,m-1$. Furthermore, $7$-dimensional oriented homotopy spheres of $14$ types of all the $28$ types admit no special generic map into ${\mathbb{R}}^3$.
\end{Thm}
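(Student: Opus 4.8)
The plan is to reduce both statements to the structure theory of special generic maps via the Stein factorization, and then to convert the question of exoticness into the computation of a smooth-structure invariant on a canonically associated bounding manifold. First I would set up the Stein factorization of a special generic map $f : \Sigma^m \to {\mathbb{R}}^n$ on a homotopy sphere $\Sigma$: writing $f = \bar{f} \circ q_f$, where $q_f : \Sigma \to W_f$ collapses each connected component of a fiber to a point and $\bar{f} : W_f \to {\mathbb{R}}^n$ is an immersion of a compact smooth $n$-manifold with boundary. Because every singular point has index $0$, over the interior of $W_f$ the map $q_f$ is an $S^{m-n}$-bundle while $\bar{f}|_{\partial W_f}$ recovers the singular value set; this exhibits $\Sigma$ as the boundary of the total space $E$ of a linear $D^{m-n+1}$-bundle over $W_f$. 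Since $\Sigma$ is a homotopy sphere, the homotopy exact sequence of the $S^{m-n}$-bundle forces $W_f$ to be a homotopy $n$-disk (when $m-n \le 3$ in Part~(1), and also when $m-n=4$, $n=3$ in Part~(2) by a connectivity count). Thus the topological type of $\Sigma$ is already pinned to $S^m$; the entire content of both statements is the smooth structure, i.e. the class $[\Sigma] \in \Theta_m$.

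Next I would extract the relevant smooth invariant. The total space $E$ deformation retracts onto the contractible $W_f$, so it is parallelizable, whence $[\Sigma]$ lies in the subgroup $bP_{m+1} \subseteq \Theta_m$. For $m=7$ this is no restriction, since $\Theta_7 = bP_8 \cong {\mathbb{Z}}/28$, and the class is detected by the Eells--Kuiper invariant $\mu$, which identifies $\Theta_7$ with $\tfrac{1}{28}{\mathbb{Z}}/{\mathbb{Z}}$. The realizable smooth structures are governed not by the linear bundle alone, but by the clutching/smoothing data recorded along $\partial W_f$, landing in homotopy groups of diffeomorphism groups of the fiber sphere $S^{m-n}$; this is exactly the data measured by the Gromoll filtration of $\Theta_m$.

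For Part~(1) the key point is that when the codimension $m-n$ is $1$, $2$, or $3$, the fiber sphere $S^{m-n}$ is low-dimensional enough that the clutching data over the homotopy $n$-disk lies in a range producing only the trivial class; equivalently, $\mu$ is forced to vanish, so $[\Sigma]=0$ and $\Sigma$ is standard. This recovers the conclusion of Calabi and Saeki. For Part~(2) the codimension is $4$, where the smoothing data is genuinely nontrivial, so I would instead compute the image of the realization. Connected sum makes the set $G$ of homotopy $7$-spheres admitting a special generic map into ${\mathbb{R}}^3$ a subgroup of $\Theta_7 \cong {\mathbb{Z}}/28$, equal to the corresponding term of the Gromoll filtration. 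The task is then to evaluate $\mu$ on the spheres $\partial E$ with $W_f \cong D^3$ arising this way and to identify $G$ as the index-$2$ subgroup of order $14$ (the even classes), so that the $14$ remaining classes, on which the relevant ${\mathbb{Z}}/2$-reduction of $\mu$ is nonzero, admit no special generic map into ${\mathbb{R}}^3$.

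I expect the main obstacle to be precisely this last invariant computation. The topological reduction to a homotopy disk is formal once the Stein factorization is available, but controlling the smooth structure---showing that $\mu$ vanishes automatically in codimension $\le 3$ for Part~(1), and computing its exact image in codimension $4$ for Part~(2)---requires a careful analysis of the framings of the disk bundle over $W_f$ and their interaction with the classification of $\Theta_7$. This is where Wrazidlo's Gromoll-filtration computation does the real work, and where any self-contained argument would have to concentrate its effort.
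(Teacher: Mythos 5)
The paper does not prove this statement: Theorem~\ref{thm:3} is quoted as known, with part~(1) attributed to \cite{calabi}, \cite{saeki}, \cite{saeki2} and part~(2) to \cite{wrazidlo}, so there is no in-paper argument to compare against. Measured against the cited literature, your global strategy (Stein factorization, contractibility of the Reeb space $W_f$, reduction of everything to smoothing/gluing data) is the right one, but your write-up contains an internal inconsistency that sits exactly at the crux. You assert at the outset that the factorization ``exhibits $\Sigma$ as the boundary of the total space $E$ of a linear $D^{m-n+1}$-bundle over $W_f$.'' If that were automatic, then since $W_f$ is contractible the bundle $E$ would be $W_f\times D^{m-n+1}$, a compact contractible $(m+1)$-manifold with simply-connected boundary, hence diffeomorphic to $D^{m+1}$ by the $h$-cobordism theorem, and \emph{every} homotopy sphere admitting a special generic map into ${\mathbb{R}}^n$ would be standard --- which would make part~(2) assert $27$ excluded classes rather than $14$. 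What the structure theorem actually gives is a decomposition of $\Sigma$ into an $S^{m-n}$-bundle over $W_f$ minus a collar, glued to a linear $D^{m-n+1}$-bundle over $\partial W_f$; whether this globalizes to a linear disk bundle over $W_f$ is controlled by the comparison of ${\rm Diff}(S^{m-n})$ with $O(m-n+1)$. For $m-n\leq 3$ these have the same homotopy type (Smale, Hatcher), and \emph{that} is the real content of part~(1); the conclusion then comes from the $h$-cobordism argument above, not from vanishing of the Eells--Kuiper invariant, which only exists in special dimensions and cannot drive a statement about all $m>3$.

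Two smaller points. First, your connectivity count for $W_f$ should be made honest: $W_f$ is an $n$-manifold with nonempty boundary, hence homotopy equivalent to an $(n-1)$-complex, and $q_f$ is sufficiently connected to kill the low homotopy groups; for $n=3$ one additionally needs the Poincar\'e conjecture to get $W_f\cong D^3$. Second, in part~(2) you propose to \emph{identify} the subgroup $G$ of realizable classes with the index-$2$ subgroup of ${\mathbb{Z}}/28$. The theorem only requires the containment of $G$ in a subgroup of order $14$, and that containment (via placing $[\Sigma]$ in a term of the Gromoll filtration and invoking its computation) is all that \cite{wrazidlo} establishes; realization of the remaining nontrivial even classes is not claimed, so your plan overshoots what is needed and what is known.
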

\begin{Thm}[\cite{kitazawa7} and \cite{kitazawa8}.]
\label{thm:4}
In the situation of Theorem \ref{thm:1}, $M$ admits no special generic map into ${\mathbb{R}}^4$ when $p \in B \oplus C$ is not zero or there exists a non-zero element in $\{a_{i,j}\}_{j=1}^{a}$ or in $\{h_{i,j}\}$. 
\end{Thm}
\subsection{Main theorems--$7$-dimensional closed, simply-connected and spin manifolds of a new class and explicit fold maps on them--.} 
One of the the main theorem of the present paper is as follows.
\begin{MThm}
\label{mthm:1}
Let $A$ and $B$ be free commutative groups of rank $a$ and $b$.
Let $\{k_{j}\}_{j=1}^{a+b}$ be a sequence of integers such that integers in $\{k_{j+a}\}_{j=1}^{b}$ are $0$ ore $1$.
Let $Y_0$ be a $4$-dimensional closed and simply-connected spin manifold whose integral cohomology ring is isomorphic to that of a manifold represented as a connected sum of finitely many copies of $S^2 \times S^2$ and let $H^j$ denote $H^{j}(Y_0;\mathbb{Z})$.
In this situation, there exist a $7$-dimensional closed and simply-connected spin manifold $M$ and an S-trivial fold map $f:M \rightarrow {\mathbb{R}^4}$ satisfying the following properties.
\begin{enumerate}
\item
\label{mthm:1.1}
$H_{\ast}(M;\mathbb{Z})$ is free. 
\item 
\label{mthm:1.2}
There exist isomorphisms $H^2(M;\mathbb{Z}) \cong A \oplus H^2$, $H^3(M;\mathbb{Z}) \cong B \oplus H^2$, $H^4(M;\mathbb{Z}) \cong B \oplus H^2$ and $H^5(M;\mathbb{Z}) \cong A \oplus H^2$ and by fixing suitable identifications we have the following properties. 
\begin{enumerate}
\item
\label{mthm:1.2.1}
Products of elements in $A \oplus \{0\} \subset H^2(M;\mathbb{Z})$ vanish.
\item
\label{mthm:1.2.2}
Products of elements in $A \oplus \{0\} \subset H^2(M;\mathbb{Z})$ and $B \oplus \{0\} \subset H^3(M;\mathbb{Z})$ vanish.
\item
\label{mthm:1.2.3}
Consider a suitable basis $\{({a_j}^{\ast},0)\}_{j=1}^{a}$ for $A \oplus \{0\} \subset H^2(M;\mathbb{Z})$ and a suitable basis for $\{({b_j}^{\ast},0)\}_{j=1}^{b}$ for $B \oplus \{0\} \subset H^3(M;\mathbb{Z})$. We also take a suitable basis $\{(0,{h_j}^{\ast})\}_{j=1}^{{\rm rank} H^2}$ for $\{0\} \oplus H^2 \subset H^2(M;\mathbb{Z})$. The product of $({a_{j_1}}^{\ast},0)$ and $(0,{h_{j_2}}^{\ast})$ is regarded as $(k_{j_1}{h_{j_2}}^{\ast},0) \in \{0\} \oplus H^2 \subset H^4(M;\mathbb{Z})$. The product of $({b_{j_1}}^{\ast},0)$ and $(0,{h_{j_2}}^{\ast})$ is regarded as $(k_{a+j_1}{h_{j_2}}^{\ast},0) \in \{0\} \oplus H^2 \subset H^5(M;\mathbb{Z})$.
\item
\label{mthm:1.2.4}
For the suitable basis $\{(0,{h_j}^{\ast})\}_{j=1}^{{\rm rank} H^2}$ for $\{0\} \oplus H^2 \subset H^2(M;\mathbb{Z})$ just before, we have the following properties.
\begin{enumerate}
\item
\label{mthm:1.2.4.1} ${\rm rank} \quad H^2$ is even.

\item
\label{mthm:1.2.4.2}
We can take the basis so that the dual ${{\rm PD}_X({h_j}^{\ast})}^{\ast}$ of ${\rm PD}_X({h_j}^{\ast})$ is ${h_{\frac{{\rm rank} H^2}{2}+j}}^{\ast}$ for $1 \leq j \leq \frac{{\rm rank} H^2}{2}$. 
\item
\label{mthm:1.2.4.3}
For the suitable basis before, the product of $(0,{h_{j_1}}^{\ast}) \in \{0\} \oplus H^2 \subset H^2(M;\mathbb{Z})$ and $(0,{h_{j_2}}^{\ast}) \in \{0\} \oplus H^2 \subset H^2(M;\mathbb{Z})$ vanishes unless $|j_1-j_2|=\frac{{\rm rank} H^2}{2}$ and the product of $(0,{h_{j_1}}^{\ast}) \in \{0\} \oplus H^2 \subset H^2(M;\mathbb{Z})$ and $(0,{h_{j_2}}^{\ast}) \in \{0\} \oplus H^2 \subset H^2(M;\mathbb{Z})$ is ${\Sigma}_{j=1}^{b} (k_{a+j}{b_j}^{\ast},0) \in B \oplus \{0\} \cong H^4(M;\mathbb{Z})$ where $|j_1-j_2|=\frac{{\rm rank} H^2}{2}$.
\end{enumerate}
\end{enumerate}
\end{enumerate} 
\end{MThm}
For this class of manifolds, we can also show the following theorem.

\begin{MThm}
\label{mthm:2}
In Main Theorem \ref{mthm:1}, we can obtain manifolds which we cannot obtain in Theorem \ref{thm:1} under the constraint that the matrix $(h_{i,j})$ is the zero matrix.
\end{MThm}
\subsection{The content of the present paper.}
\label{subsec:1.3} 
The organization of the paper is as the following.
In the next section, we demonstrate construction of new fold maps on $7$-dimensional closed, simply-connected and spin manifolds and show Main Theorems (Theorems \ref{thm:5} and \ref{thm:6}). Key methods resemble methods in the referred articles in Theorems \ref{thm:1} and \ref{thm:4} in a sense. The methods are also mainly based on arguments in \cite{kitazawa5}, \cite{kitazawa6} and so on. It is an important fact that these two articles of the author are mainly on studies of topological properties of {\it Reeb spaces} of fold maps. The {\it Reeb space} $W_c$ of a map $c:X \rightarrow Y$ is the quotient space $X/{{\sim}_c}$ of $X$ where the equivalence relation ${\sim}_c$ on $X$ is as follows. For $x_1,x_2 \in X$, $x_1 {\sim}_c x_2$ holds if and only if they are in a same connected component of a same preimage $c^{-1}(y)$ ($y \in Y$).

Reeb spaces already appeared in \cite{reeb} for example. For a fold map, the Reeb space has been shown to be a polyhedron whose dimension is equal to that of the target in \cite{kobayashisaeki}, \cite{shiota}, and so on, and Reeb spaces have been fundamental tools in studying algebraic or geometric properties, especially, (algebraic) topological properties of the manifolds. Note also that investigating the homology groups, the cohomology rings and other algebraic invariants of the manifolds are different from investigating those of the Reeb spaces and more difficult. In considerable cases, Reeb spaces inherit information of these topological invariants of the manifolds admitting the map.
 
\section{Construction of new family of fold maps on $7$-dimensional closed, simply-connected and spin manifolds of a new class.}
\label{sec:2}
Hereafter, $M$ denotes a closed and connected manifold of dimension $m$, let $n<m$ be a positive integer and let  $f:M \rightarrow {\mathbb{R}}^n$ denote a smooth map unless otherwise stated. 
For a topological space $X$, which is regarded as a polyhedron in a canonical way, let $c$ be a homology class: a smooth manifold $X$ satisfies this. The class $c$ is {\it represented} by a closed, compact and PL submanifold $i(Y)$ with no boundary, if for a homology class ${\nu}_Y$ of degree $\dim Y$ canonically obtained from $Y$, $i_{\ast}({\nu}_Y)=c$ where $i:Y \rightarrow X$ denotes the inclusion: in other words ${\nu}_Y$ is the {\it fundamental class} if $Y$ is connected, orientable and oriented and also the generator of the homology group of degree $\dim Y$ respecting the orientation. 

The following special generic maps play important roles. Hereafter, a {\it linear} bundle means a smooth bundle whose fiber is a unit sphere or disc and whose structure group acts linearly in a standard way on the fiber.

\begin{Ex}
\label{ex:2}
Let $l \geq 0$ be an integer. Let $m>n\geq 2$ be integers. A closed and connected manifold $M$ of dimension $m$ represented as a connected sum of $l$ manifolds in a family $\{S^{l_j} \times S^{m-l_j}\}_{j=1}^{l}$ of products of exactly two  standard spheres satisfying $1 \leq l_j \leq n-1$ admits a special generic map $f:M \rightarrow {\mathbb{R}}^n$ satisfying the following properties.
\begin{enumerate}
\item $f {\mid}_{S(f)}$ is an embedding.
\item $f(M)$ is a compact submanifold and represented as a boundary connected sum of $l$ manifolds in the family $\{S^{l_j} \times D^{n-l_j}\}_{j=1}^{l}$ of product manifolds.
\item The following two submersions, or smooth maps with no singular point, give a trivial liner bundle whose fiber is $D^{m-n+1}$ and a smooth bundle whose fiber is $S^{m-n}$, respectively.
\begin{enumerate}
\item The composition of the restriction of $f$ to the preimage of a small collar neighborhood of $\partial f(M)$ with the canonical projection to $\partial f(M)$.
\item The restriction of $f$ to the preimage of the complementary set of the interior of the collar neighborhood
 before in $f(M)$.
\end{enumerate}
\item Let $f_M$ denote the surjection obtained by restricting the target of $f$ to $f(M)$. The homomorphism ${f_M}_{\ast}$ between the homology groups maps a class represented
 by $S^{l_j} \times \{{\ast}_{j,1}\} \subset S^{l_j} \times S^{m-l_j}$ in the connected sum to a class represented by $S^{l_j} \times \{{\ast}_{j,2}\} \subset S^{l_j} \times {\rm Int} D^{n-l_j} \subset S^{l_j} \times D^{n-l_j}$ in the boundary connected sum $f(M)$.
\end{enumerate}
\end{Ex}
The following proposition is a fundamental fact and rigorous proofs are left to readers.
\begin{Prop}
\label{prop:2}
In the situation of Example \ref{ex:2}, let $(m,n)=(7,4)$, $l>0$ and $l_a,l_b \geq 0$ be integers satisfying $l_a+l_b=l$ and let $l_j=2$ for $1 \leq j \leq l_a$ and $l_j=3$ for $l_a+1 \leq j \leq l$. 
The homology class ${\nu}_j$ of the manifold is the class represented
 by $S^{l_j} \times \{{\ast}_{j,1}\} \subset S^{l_j} \times S^{m-l_j}$ in the connected sum. We have the following two properties.
\begin{enumerate}
\item For a copy $X$ of $S^3$, put a generator ${\nu}_{X,3}$ of its
3rd integral homology group, isomorphic to $\mathbb{Z}$, or a fundamental class.
Let $\{a_{j+l_a}\}_{j=1}^{l_b}$ be a sequence of integers of length $l_b$ such that the integers are $0$ or $1$.
In this situation, there exists an embedding $i_{X,f(M)}$ of $X$ into the interior of $f(M)$ such that ${i_{X,f(M)}}_{\ast}({\nu}_{X,3})={\Sigma}_{j=1}^{l_b} a_{j+l_a} {f_M}_{\ast}({\nu}_{j+l_a})$.
\item For a copy $X$ of $S^2 \times S^1$, take a generator ${\nu}_{X,2}$ of its 2nd integral homology group, isomorphic to $\mathbb{Z}$, and a generator ${\nu}_{X,3}$ of its
3rd integral homology group, isomorphic to $\mathbb{Z}$, or a fundamental class. Let $\{a_j\}_{j=1}^l$ be a sequence of integers of length $l$ such that the integers in $\{a_{j+l_a}\}_{j=1}^{l_b}$ are $0$ or $1$.
In this situation, there exists an embedding $i_{X,f(M)}$ of $X$ into the interior of $f(M)$ such that ${i_{X,f(M)}}_{\ast}({\nu}_{X,2})={\Sigma}_{j=1}^{l_a} a_j{f_M}_{\ast}({\nu}_j)$ and that ${i_{X,f(M)}}_{\ast}({\nu}_{X,3})={\Sigma}_{j=1}^{l_b} a_{j+l_a} {f_M}_{\ast}({\nu}_{j+l_a})$.
\end{enumerate}
\end{Prop}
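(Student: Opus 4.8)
The plan is to work inside the compact $4$-dimensional submanifold $f(M) \subset \mathbb{R}^4$, which by Example \ref{ex:2} is a boundary connected sum of summands $S^{2} \times D^{2}$ (for $1 \leq j \leq l_a$) and $S^{3} \times D^{1}$ (for $l_a+1 \leq j \leq l$), and to realize the prescribed classes by explicit embedded submanifolds built out of the product spheres $S^{l_j} \times \{\ast\}$ together with tubes (ambient connected sums) running through the interior. Throughout, ${f_M}_{\ast}({\nu}_j)$ is the class of $S^{l_j} \times \{\ast_{j,2}\}$ in the $j$-th summand, a degree-$2$ class for $j \leq l_a$ and a degree-$3$ class for $j > l_a$; since $f(M)$ is connected and orientable, arcs joining the summands exist and orientations can be fixed so that ambient connected sums add fundamental classes with positive sign.

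For assertion (1), first I would take, for every $j$ with $a_{j+l_a}=1$, the embedded $3$-sphere $S^{3} \times \{\ast\}$ sitting in the interior of the $(j+l_a)$-th summand; these are pairwise disjoint. Then I would join them successively by embedded arcs whose interiors lie in the complement, and replace each arc by a thin tube $S^{2} \times [0,1]$, i.e. perform the ambient connected sum of these $3$-spheres. Because the connected sum of copies of $S^{3}$ is again $S^{3}$, the result is a single embedded copy of $S^{3}$ in $\mathrm{Int}\, f(M)$, and with the signs arranged as above its fundamental class is $\sum_{j=1}^{l_b} a_{j+l_a}\,{f_M}_{\ast}({\nu}_{j+l_a})$, as required.

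For assertion (2) I would build the $S^{2} \times S^{1}$ in three stages. First, to realize the degree-$2$ part $\alpha = \sum_{j=1}^{l_a} a_j\,{f_M}_{\ast}({\nu}_j)$ with arbitrary integer coefficients, I would take, in the $j$-th summand, $|a_j|$ parallel copies of $S^{2} \times \{\ast\}$ (oriented according to the sign of $a_j$) and tube all of these together into a single embedded $2$-sphere $\Sigma$ representing $\alpha$; the connected sum of spheres is again a sphere, so $\Sigma \cong S^{2}$. Next, since the product spheres $S^{2} \times \{\ast\}$ have trivial normal bundle and the copies and summands are mutually disjoint, the homological self-intersection $\Sigma \cdot \Sigma$ vanishes, so the normal Euler number of $\Sigma$ is $0$ and a tubular neighborhood is a trivial bundle $\Sigma \times D^{2} \cong S^{2} \times D^{2}$; taking its boundary circle bundle gives an embedded $N_0 = S^{2} \times S^{1}$ with ${\nu}_{X,2} \mapsto \alpha$ and, because $N_0$ bounds the $4$-chain $\Sigma \times D^{2}$, with ${\nu}_{X,3} \mapsto 0$. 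Finally, for each $j$ with $a_{j+l_a}=1$ I would tube onto $N_0$ the sphere $S^{3} \times \{\ast\}$ of the $(j+l_a)$-th summand; since $(S^{2} \times S^{1}) \# S^{3} \cong S^{2} \times S^{1}$ the manifold type is unchanged, the degree-$2$ class is unaffected (the tubes and the $S^{3}$'s carry no $H_2$ and can be kept off a fixed $S^{2}$-fibre), and the fundamental class becomes $\sum_{j=1}^{l_b} a_{j+l_a}\,{f_M}_{\ast}({\nu}_{j+l_a})$.

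The main obstacle will be the normal-bundle step in the middle of (2): one must check that after tubing together the parallel copies over several summands the resulting embedded sphere still has vanishing normal Euler number, so that it genuinely thickens to $S^{2} \times D^{2}$ and the boundary $S^{2} \times S^{1}$ has trivial fundamental class. This rests on the vanishing of the relevant self-intersection numbers, which in turn comes from the product structure $S^{2} \times D^{2}$ of the summands and their disjointness, and is preserved under ambient connected sum of disjointly supported representatives. The remaining bookkeeping — keeping the arcs, tubes and spheres disjoint inside the interior and fixing compatible orientations so that all classes add with the stated signs — is routine in this dimension.
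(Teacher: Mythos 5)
The paper offers no proof of Proposition \ref{prop:2} (it is declared ``a fundamental fact'' with ``rigorous proofs left to readers''), so there is nothing to compare against; judged on its own, your argument is correct and is surely the intended one: realize the degree-$3$ part by ambient connected sum of the fibres $S^{3}\times\{\ast\}$, and the degree-$2$ part by tubing parallel copies of $S^{2}\times\{\ast\}$ into one embedded sphere $\Sigma$ with vanishing normal Euler number, whose tubular neighbourhood boundary $\Sigma\times S^{1}$ supplies the $S^{2}\times S^{1}$ with null-homologous fundamental class, to be corrected afterwards by summing on the $3$-spheres. One point you file under ``routine bookkeeping'' deserves a sentence in a written version: the $3$-spheres are \emph{codimension one}, so they separate $f(M)$ and a connecting arc cannot in general avoid all of them; you must tube successively, using only the spheres with coefficient $1$ and the chain structure of the boundary connected sum, under which any two consecutive chosen spheres do lie on the boundary of a common complementary component. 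This is also exactly where the hypothesis that the degree-$3$ coefficients lie in $\{0,1\}$ enters: a class such as $2\,{f_M}_{\ast}({\nu}_{j})$ for $j>l_a$ is not representable by a connected closed embedded $3$-manifold in $S^{3}\times D^{1}$ at all, so your construction (which never takes parallel copies of the $3$-spheres) respects the only restriction that could cause trouble. The codimension-two degree-$2$ part, including additivity of the normal Euler number under ambient connected sum of disjoint product spheres, is unproblematic as you say.
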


For a compact manifold $X$, let there exist a closed and connected manifold $X_0$ such that $X$ is obtained by removing the interior of the union of two smoothly and disjointly embedded unit discs of $\dim X_0$ and a Morse function $c:X_0 \rightarrow \mathbb{R}$ such that at distinct singular points the values are distinct, that there exist exactly two local extrema $a<b$, that their preimages are in the two embedded unit discs and that on the distinctly embedded unit discs there exists no other singular point. Let $c_{X,(X_0,X)}$ denote the restriction $c {\mid}_X$.

For a graded commutative algebra $A$ over $\mathbb{Z}$, we define the {\it $i$-th module} as the module consisting of all elements of degree $i$ of $A$. We also assume that the $0$-th module is isomorphic to $\mathbb{Z}$.

For a non-negative integer $i \geq 0$, we define the {\it $\leq i$-part} $A_{\leq i}$ of $A$ as a graded commutative algebra over $\mathbb{Z}$ as the following and as a graded module, this is regarded as a submodule of the module $A$.
\begin{enumerate}
\item The $j$-th module is same as that of $A$ for any $j \leq i$.
\item The product of two elements such that the sum of the degrees is smaller than or equal to $i$ is same as that in the case of $A$.
\item The $j$-th module is the zero module for any $j>i$.
\end{enumerate}
\begin{Prop}
\label{prop:3}
In the situation just before, let $X_0$ be orientable satisfying $\dim X_0>1$ and let $i_{X}:X \rightarrow X_0$ denote the inclusion.
\begin{enumerate} 
\item The restriction of ${i_{X}}^{\ast}:H^{\ast}(X_0;\mathbb{Z}) \rightarrow H^{\ast}(X;\mathbb{Z})$ to $H^{\ast}(X_0;\mathbb{Z})_{\leq \dim X-2}$ is an isomorphism between the graded commutative algebras $H^{\ast}(X_0;\mathbb{Z})_{\leq \dim X-2}$ and $H^{\ast}(X;\mathbb{Z})_{\leq \dim X-2}$.
\item The restriction of ${i_{X}}^{\ast}:H^{\ast}(X_0;\mathbb{Z}) \rightarrow H^{\ast}(X;\mathbb{Z})$ to $H^{\ast}(X_0;\mathbb{Z})_{\leq \dim X-1}$ gives a monomorphism between the graded commutative algebras $H^{\ast}(X_0;\mathbb{Z})_{\leq \dim X-1}$ and $H^{\ast}(X;\mathbb{Z})_{\leq \dim X-1}$ and $H^{\ast}(X;\mathbb{Z})_{\leq \dim X-1}$ is represented as the internal direct sum of the image of the monomorphism and a commutative subgroup G of $H^{\ast}(X;\mathbb{Z})_{\leq \dim X-1}$ isomorphic to $\mathbb{Z}$.
\end{enumerate}
\end{Prop}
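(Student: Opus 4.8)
The plan is to analyze the inclusion-induced map $i_X^{\ast}$ through the long exact cohomology sequence of the pair $(X_0,X)$, and then to upgrade the resulting degreewise statements to statements about the truncated graded algebras. Write $n=\dim X=\dim X_0$. By construction $X$ is a compact connected $n$-manifold whose boundary is a disjoint union of two copies of $S^{n-1}$, the boundaries of the two removed discs, and $X_0$ is recovered by gluing these discs back. Since the two discs are disjointly and smoothly embedded, excision identifies $H^k(X_0,X;\mathbb{Z})$ with $\bigoplus_{i=1,2}H^k(D_i,\partial D_i;\mathbb{Z})$, which is $\mathbb{Z}^2$ for $k=n$ and $0$ otherwise.

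First I would read off the low-degree part of the sequence $\cdots \to H^k(X_0,X) \to H^k(X_0) \xrightarrow{i_X^{\ast}} H^k(X) \to H^{k+1}(X_0,X) \to \cdots$. For $k\leq \dim X-2$ both neighbouring relative groups vanish, so $i_X^{\ast}$ is an isomorphism in each such degree; together with $H^0(X_0)\cong H^0(X)\cong\mathbb{Z}$ (both spaces are connected, using $\dim X_0>1$ so that removing two discs leaves $X$ connected) this already gives the degreewise content of part (1).

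Next I would treat degree $\dim X-1$. Here $H^{n-1}(X_0,X)=0$ gives injectivity of $i_X^{\ast}$, while the tail of the sequence reads $0 \to H^{n-1}(X_0) \xrightarrow{i_X^{\ast}} H^{n-1}(X) \xrightarrow{\delta} H^n(X_0,X) \xrightarrow{j^{\ast}} H^n(X_0) \to H^n(X) \to 0$. Orientability of $X_0$ gives $H^n(X_0)\cong\mathbb{Z}$, and since $X$ is a compact connected manifold with nonempty boundary one has $H^n(X)=0$ (for instance, by Lefschetz duality $H^n(X)\cong H_0(X,\partial X)=0$). Hence $j^{\ast}\colon\mathbb{Z}^2\to\mathbb{Z}$ is surjective, so $\ker j^{\ast}=\operatorname{im}\delta\cong\mathbb{Z}$, and the short exact sequence $0 \to H^{n-1}(X_0) \xrightarrow{i_X^{\ast}} H^{n-1}(X) \to \mathbb{Z} \to 0$ splits because $\mathbb{Z}$ is free. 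This produces the complementary subgroup $G\cong\mathbb{Z}$ concentrated in degree $\dim X-1$ and exhibits $H^{n-1}(X)$ as the internal direct sum $i_X^{\ast}(H^{n-1}(X_0))\oplus G$. It is precisely here that orientability enters: for non-orientable $X_0$ the group $H^n(X_0)$ would be $2$-torsion and the rank-one conclusion for $G$ would fail.

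Finally I would promote these degreewise facts to the graded-algebra statements. Since $i_X^{\ast}$ is a homomorphism of graded rings and passing to the $\leq i$-part only replaces products of total degree exceeding $i$ by zero, the restriction of $i_X^{\ast}$ to $H^{\ast}(X_0;\mathbb{Z})_{\leq i}$ is automatically a homomorphism of the truncated graded algebras for any $i$: products surviving the truncation have total degree $\leq i$ and are genuine products preserved by $i_X^{\ast}$, while all others vanish on both sides. Taking $i=\dim X-2$ and combining with the degreewise isomorphisms yields the algebra isomorphism of part (1); taking $i=\dim X-1$ and combining with injectivity in each degree $\leq\dim X-1$ yields the monomorphism of part (2), with the extra generator captured by $G$. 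I expect the only genuinely delicate point to be the bookkeeping in this last step, namely verifying compatibility of the truncation with $i_X^{\ast}$ and that $G$ splits off in the single degree $\dim X-1$, rather than the exact-sequence computation itself, which is routine.
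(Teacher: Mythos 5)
Your argument is correct: the excision computation $H^k(X_0,X;\mathbb{Z})\cong\mathbb{Z}^2$ for $k=\dim X$ and $0$ otherwise, the resulting degreewise isomorphisms for $k\leq\dim X-2$, the split short exact sequence $0\to H^{n-1}(X_0)\to H^{n-1}(X)\to\mathbb{Z}\to 0$ coming from the surjection $\mathbb{Z}^2\to H^n(X_0)\cong\mathbb{Z}$ with $H^n(X)=0$, and the observation that truncation is compatible with the ring homomorphism $i_X^{\ast}$ all check out. The paper itself gives no proof of this proposition (it is stated as a fundamental fact with the verification left to the reader), so there is nothing to compare against; your long-exact-sequence argument is the standard one that fills that gap.
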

We need several notions and explain them. For a closed, connected  and oriented manifold $X$, we denote the so-called {\it Poincar\'e dual} to an integral (co)homology class $c$ by ${\rm PD}_X(c)$.
If for a (compact) topological space $X$ whose integral homology group is free, then we can define the {\it dual} $c^{\ast} \in H^{j}(X;\mathbb{Z})$ of a homology class $c \in H_{j}(X;\mathbb{Z})$ we cannot represent as
 a form $kc^{\prime}$ such that $k \neq 0,1,-1$ is an integer and that $c^{\prime}$ is not zero uniquely. This is the element satisfying the following properties.
\begin{enumerate}
\item $c^{\ast}(c)=1$.
\item For any subgroup $G$ of $H_{j}(X;\mathbb{Z})$ making $H_{j}(X;\mathbb{Z})$ the internal direct sum of the group generated by $c$ and $G$ and for any $g \in G$, $c^{\ast}(g)=0$.
\end{enumerate}
Hereafter, $\cong$ between groups means that the groups are isomorphic.
\begin{Thm}
\label{thm:5}
Let $A$ and $B$ be free commutative groups of rank $a$ and $b$.
Let $\{k_{j}\}_{j=1}^{a+b}$ be a sequence of integers such that integers in $\{k_{j+a}\}_{j=1}^{b}$ are $0$ ore $1$.
Let $Y_0$ be a $4$-dimensional closed, simply-connected and spin manifold whose integral cohomology ring is isomorphic to that of a manifold represented as a connected sum of finitely many copies of $S^2 \times S^2$ and let $H^j$ denote $H^{j}(Y_0;\mathbb{Z})$. In this situation, there exist a $7$-dimensional closed and simply-connected spin manifold $M$ and an S-trivial fold map $f:M \rightarrow {\mathbb{R}^4}$ satisfying the following properties.
\begin{enumerate}
\item
\label{thm:5.1}
$H_{\ast}(M;\mathbb{Z})$ is free.
\item
\label{thm:5.2}
There exist isomorphisms $H^2(M;\mathbb{Z}) \cong A \oplus H^2$, $H^3(M;\mathbb{Z}) \cong B \oplus H^2$, $H^4(M;\mathbb{Z}) \cong B \oplus H^2$ and $H^5(M;\mathbb{Z}) \cong A \oplus H^2$ and by fixing suitable identifications we have the following properties.
\begin{enumerate}
\item
\label{thm:5.2.1}
Products of elements in $A \oplus \{0\} \subset H^2(M;\mathbb{Z})$ vanish.
\item
\label{thm:5.2.2}
Products of elements in $A \oplus \{0\} \subset H^2(M;\mathbb{Z})$ and $B \oplus \{0\} \subset H^3(M;\mathbb{Z})$ vanish.
\item
\label{thm:5.2.3}
Consider a suitable basis $\{({a_j}^{\ast},0)\}_{j=1}^{a}$ for $A \oplus \{0\} \subset H^2(M;\mathbb{Z})$ and a suitable basis $\{({b_j}^{\ast},0)\}_{j=1}^{b}$ for $B \oplus \{0\} \subset H^3(M;\mathbb{Z})$. We also take a suitable basis $\{(0,{h_j}^{\ast})\}_{j=1}^{{\rm rank} H^2}$ for $\{0\} \oplus H^2 \subset H^2(M;\mathbb{Z})$. The product of $({a_{j_1}}^{\ast},0)$ and $(0,{h_{j_2}}^{\ast})$ is regarded as $(0,k_{j_1}{h_{j_2}}^{\ast}) \in \{0\} \oplus H^2 \subset H^4(M;\mathbb{Z})$. The product of $({b_{j_1}}^{\ast},0)$ and $(0,{h_{j_2}}^{\ast})$ is regarded as $(0,k_{a+j_1}{h_{j_2}}^{\ast}) \in \{0\} \oplus H^2 \subset H^5(M;\mathbb{Z})$.
\item
\label{thm:5.2.4}
For the suitable basis $\{(0,{h_j}^{\ast})\}_{j=1}^{{\rm rank} H^2}$ for $\{0\} \oplus H^2 \subset H^2(M;\mathbb{Z})$ just before, we have the following properties.
\begin{enumerate}
\item
\label{thm:5.2.4.1} ${\rm rank} \quad H^2$ is even.
\item
\label{thm:5.2.4.2}
We can take the basis so that the dual ${{\rm PD}_{Y_0}({h_j}^{\ast})}^{\ast}$ of ${\rm PD}_{Y_0}({h_j}^{\ast})$ is ${h_{\frac{{\rm rank} H^2}{2}+j}}^{\ast}$ for $1 \leq j \leq \frac{{\rm rank} H^2}{2}$. 
\item
\label{thm:5.2.4.3}
For the suitable basis before, the product of $(0,{h_{j_1}}^{\ast}) \in \{0\} \oplus H^2 \subset H^2(M;\mathbb{Z})$ and $(0,{h_{j_2}}^{\ast}) \in \{0\} \oplus H^2 \subset H^2(M;\mathbb{Z})$ vanishes unless $|j_1-j_2|=\frac{{\rm rank} H^2}{2}$
and the product of $(0,{h_{j_1}}^{\ast}) \in \{0\} \oplus H^2 \subset H^2(M;\mathbb{Z})$ and $(0,{h_{j_2}}^{\ast}) \in \{0\} \oplus H^2 \subset H^2(M;\mathbb{Z})$ is ${\Sigma}_{j=1}^{b} (k_{a+j}{b_j}^{\ast},0) \in B \oplus \{0\} \cong H^4(M;\mathbb{Z})$ where $|j_1-j_2|=\frac{{\rm rank} H^2}{2}$.
\end{enumerate}
\end{enumerate}
\end{enumerate} 
\end{Thm}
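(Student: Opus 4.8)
The plan is to build $M$ and $f$ by hand as an explicit assembly of pieces whose contributions to $H^{\ast}(M;\mathbb{Z})$ and to the ring structure can be tracked separately, and then to read off all the assertions. The $A$- and $B$-summands will come from a base special generic map, while $H^2$ will be transplanted from $Y_0$ through the device of removing two discs (Proposition \ref{prop:3}). Concretely, I would start from the special generic map of Example \ref{ex:2} and Proposition \ref{prop:2} with $(m,n)=(7,4)$ on the connected sum $\#_{a}(S^2\times S^5)\,\#\,\#_{b}(S^3\times S^4)$, taking $l_a=a$ copies with $l_j=2$ and $l_b=b$ copies with $l_j=3$. Its image $W_0:=f(M_0)\subset\mathbb{R}^4$ is the boundary connected sum of the corresponding $S^{l_j}\times D^{4-l_j}$, the generic fibre is $S^3$, and the resulting closed manifold is spin and simply connected with $H^2\cong\mathbb{Z}^a$, $H^3\cong\mathbb{Z}^b$, $H^4\cong\mathbb{Z}^b$, $H^5\cong\mathbb{Z}^a$; this already realizes the $A$- and $B$-summands in degrees $2,5$ and $3,4$ respectively. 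The embedded copies of $S^3$ and $S^2\times S^1$ furnished by Proposition \ref{prop:2}, with their $0/1$ coefficients identified with the integers $k_1,\dots,k_{a+b}$, are the loci along which I will later perform the fibrewise modifications that create the mixed products of item \ref{thm:5.2.3}.

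Next I would incorporate $Y_0$. Writing $X:=Y_0\setminus\mathrm{int}(D^4\sqcup D^4)$ and using the Morse function $c_{X,(Y_0,X)}$ together with Proposition \ref{prop:3}, the inclusion induces a ring isomorphism onto $H^{\ast}(Y_0;\mathbb{Z})$ in degrees $\le 2$, so $H^2(X)\cong H^2$ carries exactly the rank-$(\mathrm{rank}\,H^2)$ group I want, while the removal of the two discs forces $H^4(X)=0$ and hence annihilates the intersection form inside $X$. I would use $X$ (equivalently its associated sphere-fibre data) as the fibre model over an inner region of $\mathbb{R}^4$ bounded by a nested family of index-$1$ fold loci arranged as concentric spheres, exactly in the S-trivial concentric pattern of Theorem \ref{thm:2}; crossing these loci performs $S^0$-surgeries on the $S^3$-fibres, so that preimages of regular values are disjoint unions of finitely many copies of $S^3$ and Definition \ref{def:2} is met. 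The whole map is assembled so that each building block and each sphere bundle is linear and compatible with a fixed spin structure, which keeps $M$ spin, and van Kampen applied to the simply connected base regions and $S^3$-fibres keeps $M$ simply connected.

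With $M$ and $f$ in hand, I would compute $H_{\ast}(M;\mathbb{Z})$ by a Mayer--Vietoris argument along the Reeb-space decomposition induced by the concentric fold loci, using that over each open sheet the quotient map is an $S^3$-bundle. This shows $H_{\ast}(M;\mathbb{Z})$ is free of the claimed ranks, establishing item \ref{thm:5.1} and, after dualizing, the group isomorphisms of item \ref{thm:5.2}: the $A$-part sits in $H^2,H^5$, the $B$-part in $H^3,H^4$, and the transplanted $H^2$ appears in all four degrees, the occurrences in $H^3,H^4,H^5$ arising from the fibrewise index-$1$ modifications and the twisted gluing rather than from a genuine sphere bundle over $Y_0$ (which a Gysin-sequence check rules out). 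The vanishing products \ref{thm:5.2.1} and \ref{thm:5.2.2} then follow because the $A$- and $B$-classes are supported on summands of $S^i\times S^{7-i}$ type whose relevant cup products vanish for dimension reasons, and the mixed products of \ref{thm:5.2.3} are read off from the Proposition \ref{prop:2} loci weighted by $k_{j_1}$ and $k_{a+j_1}$.

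The main obstacle I anticipate is item \ref{thm:5.2.4}, that is, controlling the self-pairing of the transplanted $H^2$ inside the $7$-manifold. Evenness of its rank (item \ref{thm:5.2.4.1}) I would get from the spin hypothesis on $Y_0$, since a spin $4$-manifold has even unimodular intersection form and hence even second Betti number, and the Poincar\'e-dual normal form of item \ref{thm:5.2.4.2} from the fact that the Poincar\'e pairing $H^2(M;\mathbb{Z})\times H^5(M;\mathbb{Z})\to\mathbb{Z}$ is perfect on free homology, so complementary bases can always be chosen. The genuinely delicate point is item \ref{thm:5.2.4.3}: the intersection form of $Y_0$ has been destroyed in $X$ (since $H^4(X)=0$), and it must be \emph{recreated} in $M$ by the index-$1$ fold gluings, landing not in an $H^2$-part but as the single class $\sum_{j=1}^{b}(k_{a+j}{b_j}^{\ast},0)\in B\oplus\{0\}\cong H^4(M;\mathbb{Z})$ for dual pairs and vanishing otherwise. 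I would pin this down by expressing the cup products $(0,{h_{j_1}}^{\ast})\cup(0,{h_{j_2}}^{\ast})$ through the Mayer--Vietoris connecting homomorphisms across the fold loci and the Poincar\'e--Lefschetz duality of the pieces, so that the pairing is computed as a linking number that is nonzero exactly for the dual pairs indexed by $|j_1-j_2|=\tfrac{\mathrm{rank}\,H^2}{2}$ and is weighted precisely by the surgery coefficients $k_{a+j}$. Verifying that this computation yields exactly the stated class, simultaneously with freeness, the spin condition, and simple connectivity, is where the work concentrates.
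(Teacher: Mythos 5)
Your first step coincides with the paper's: both start from the special generic map of Example \ref{ex:2} on $\#_{a}(S^2\times S^5)\,\#\,\#_{b}(S^3\times S^4)$ and use the embedded copies of $S^3$ and $S^2\times S^1$ of Proposition \ref{prop:2} with the coefficients $k_j$. The gap is in the decisive step, namely how $Y_0$ is actually built into $M$. In the paper this is a normal bubbling: one takes the embedded $3$-manifold $X=S^2\times S^1$ in the interior of $f_0(M_0)$, removes the preimage of a tubular neighborhood $N(X)\cong X\times[-1,1]$ (a trivial $S^3$-bundle $X\times[-1,1]\times S^3$), and glues in the \emph{product} $X\times Y$ with the map $\mathrm{id}_X\times c_{Y,(Y_0,Y)}$, where $Y=Y_0\setminus\mathrm{int}(D^4\sqcup D^4)$. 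That product structure is the whole point: by K\"unneth, $H_2(Y)\cong H^2$ paired with $H_\ast(S^2\times S^1)$ in degrees $0,1,2,3$ is exactly what produces the four summands $H^2$ inside $H^2,H^3,H^4,H^5(M)$, and it is also what reduces items (\ref{thm:5.2.3}) and (\ref{thm:5.2.4.3}) to Propositions \ref{prop:2} and \ref{prop:3}. You instead describe the insertion of $Y_0$ as a concentric family of index-$1$ fold loci over an ``inner region'' of $\mathbb{R}^4$ with all regular fibres disjoint unions of copies of $S^3$. This does not typecheck as a construction that transplants $H^2(Y_0)$: the fibres of the replaced piece are the level sets of $c_{Y,(Y_0,Y)}$, which for a general simply-connected spin $Y_0$ (already for $S^2\times S^2$ or a $K3$ surface) are \emph{not} unions of $3$-spheres; and if all regular fibres really were unions of $S^3$'s in a concentric pattern you would be back in the setting of Theorems \ref{thm:1}--\ref{thm:2}, which Theorem \ref{thm:6} shows cannot yield these manifolds. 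Your attribution of the $H^2$ summands in degrees $3,4,5$ to ``fibrewise index-$1$ modifications and twisted gluing'' therefore replaces the one mechanism that makes the theorem work (the product with $S^2\times S^1$) by something that does not produce them.

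A secondary but real problem is that the two computations carrying the content of the theorem are left as intentions rather than arguments: for (\ref{thm:5.2.3}) you say the products ``are read off from the Proposition \ref{prop:2} loci,'' and for (\ref{thm:5.2.4.3}) you propose to ``pin this down'' by a Mayer--Vietoris/linking-number computation without performing it. In the paper's setup these follow because the classes $(0,h_j^{\ast})$ are supported on $X\times Y$, their products vanish there (Proposition \ref{prop:3}, since $H^4(Y;\mathbb{Z})=0$), and the nonzero contribution $\sum_{j=1}^{b}(k_{a+j}b_j^{\ast},0)$ is forced by the homology class $\sum_j k_{a+j}{f_M}_{\ast}(\nu_{j+l_a})$ that $X$ represents in $f_0(M_0)$. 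Your observations that the spin hypothesis forces $\mathrm{rank}\,H^2$ to be even and that $H^4(Y)=0$ kills the intersection form inside $Y$ are correct, but without the product decomposition $X\times Y$ the remaining assertions are not established.
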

\begin{proof}
First we construct a special generic map $f_0$ on an $7$-dimensional manifold $M_0$ into ${\mathbb{R}}^4$ in Example \ref{ex:2} by setting $(l_a,l_b)=(a,b)$ in Proposition \ref{prop:2}. We replace $a_j$ by $k_j$ there. Take a copy ''$X$ in Proposition \ref{prop:2}'' and remove the interior of a small closed tubular
neighborhood $N(X)$ and its preimage.

Replace this removed map by a product map of $c_{Y,(Y_0,Y)}$ before where ''$X$'' and ''$X_0$'' are replaced by a suitable compact manifold $Y$ and $Y_0$ with a suitable Morse function $c$ respectively and the identity map on a manifold diffeomorphic to ''$X$ in Proposition \ref{prop:2} here''. Note that this is regarded as a finite iteration of normal bubbling operations in \cite{kitazawa5}, \cite{kitazawa6}, and so on. Thus we have a desired fold map $f$.

We observe the integral homology group and the integral cohomology ring of the manifold $M$ to complete the proof.

First we show the first statement (\ref{thm:5.1}) and the second statement (\ref{thm:5.2}) on the cohomology group. $H_2(M_0;\mathbb{Z})$ and $H_3(M_0;\mathbb{Z})$ are generated by classes represented by standard spheres in $M_0$ and apart from ${f_0}^{-1}(N(X))$ and from these standard spheres and homology classes represented by them we have subgroups isomorphic to $H_2(M_0;\mathbb{Z})$ and  $H_3(M_0;\mathbb{Z})$ respectively with their bases.
Let $A^{\ast}:=H^2(M_0;\mathbb{Z}) \cong A$ and $B^{\ast}:=H^3(M_0;\mathbb{Z}) \cong B$: we take the duals of classes in the bases. We take a basis of $H_2(Y;\mathbb{Z}) \cong H_2(Y_0;\mathbb{Z})$ such that we can define the dual of each of the elements in the basis. 

We go to (\ref{thm:5.2.4}) and will show the first two statements. Since $Y_0$ is a $4$-dimensional, closed, simply-connected and spin manifold whose integral cohomology ring is isomorphic to that of a manifold represented as a connected sum of finitely many copies of $S^2 \times S^2$, the first two statements hold (for $H^2$). Furthermore, we can obtain a basis. For the basis of $H_2(Y;\mathbb{Z}) \cong H_2(Y_0;\mathbb{Z})$ for each of which we can define the dual as before, we can take their duals.

We will show on the existence of identifications of the cohomology groups with groups represented as direct sums of given abstract groups or $H^2$. 
By considering elements of the bases of $A^{\ast} \cong A$ and $H^2$ as integral cohomology classes of $M$ in a canonical way, we can observe the structure of $H^2(M;\mathbb{Z})$ and obtain a desired identification. $H^4(M;\mathbb{Z})$ is isomorphic to the internal direct sum of the following two subgroups $A_{4,1}$ and $A_{4,2}$.
\begin{enumerate}
\item The classes represented by standard spheres in $M_0$ and apart from ${f_0}^{-1}(N(X))$ and  forming a basis of $H_3(M_0;\mathbb{Z})$. Consider the classes represented by these spheres in $M$ and their Poincar\'e duals. We define $A_{4,1}$ as the subgroup generated by all the Poincar\'e duals. This is isomorphic to $B$.
\item We take the image of a section of the trivial bundle defined by the restriction of $f_0$ to ${f_0}^{-1}(X)$. Each element of the basis of $H_2(Y_0;\mathbb{Z}) \cong H_2(Y;\mathbb{Z})$ used to obtain the basis for $H^2$ before is represented by a 2-dimensional closed submanifold with no boundary and we can obtain a product of the image of the section over $S^2 \times \{\ast\} \subset X=S^2 \times S^1$ and this submanifold in $M$ in a canonical way. We can take the dual of the $4$-th integral homology class of $M$ represented by the resulting $4$-dimensional submanifold. $A_{4,2}$ is defined as a subgroup generated by the
set of all such classes. This is isomorphic to $H^2$.
\end{enumerate}
Thus we also have a desired identification. We can observe the structure of $H^3(M;\mathbb{Z})$ and obtain a desired identification by the following two steps.
\begin{enumerate}
\item Observe elements of the basis of $B^{\ast} \cong B$.
\item Regard $H^2$, isomorphic to $A_{4,2}$, as the image of the map corresponding each element of the basis of the submodule used to obtain $A_{4,2}$ just before to the Poincar\'e dual.
\end{enumerate} 
$H^5(M;\mathbb{Z})$ is isomorphic to the internal direct sum of the following two subgroups $A_{5,1}$ and $A_{5,2}$.
\begin{enumerate}
\item The classes represented by standard spheres in $M_0$ and apart from ${f_0}^{-1}(N(X))$ and  forming a basis for $H_2(M_0;\mathbb{Z})$. Consider the classes represented by these spheres in $M$ and their Poincar\'e duals. We define $A_{5,1}$ as the subgroup generated by all the Poincar\'e duals. This is isomorphic to $A$.
\item We take the image of a section of the trivial bundle defined by the restriction of $f_0$ to ${f_0}^{-1}(X)$. Each element of the suitable generator of $H_2(Y_0;\mathbb{Z}) \cong H_2(Y;\mathbb{Z})$ used to obtain the basis for $H^2$ before is represented by a 2-dimensional closed submanifold with no boundary and we can obtain a product of the image of the section over $X=S^2 \times S^1$ and this submanifold in $M$ in a canonical way. We can take the dual as a 5-th integral cohomology class of $M$. $A_{5,2}$ is defined as a subgroup generated by the set of all such classes. This is isomorphic to $H^2$.
\end{enumerate}
Thus we also have a desired identification. We have desired identifications. 

Note also that we can demonstrate this construction so that $M$ is simply-connected and spin.
We can show (\ref{thm:5.2.1}) and (\ref{thm:5.2.2}) immediately by virtue of the structures of the cohomology groups, submanifolds which are used in the important bases as homology classes represented by them and (\ref{thm:5.2.3}) follows by virtue of the structures of the cohomology groups, submanifolds used in the important bases as homology classes represented by them and Proposition \ref{prop:2}. The third statement of (\ref{thm:5.2.4}) follows by virtue of the structures of the cohomology groups, submanifolds used in the important bases as homology classes represented by them and Propositions \ref{prop:2} and \ref{prop:3}. 

This completes the proof.

\end{proof}

\begin{Thm}
\label{thm:6}
In Theorem \ref{thm:5}, we have manifolds $M$ having integral cohomology rings we cannot obtain as the integral cohomology rings of the manifolds in Theorem \ref{thm:1} where the matrix $(h_{i,j})$ is the  zero matrix.
\end{Thm}
\begin{proof}
In Theorem \ref{thm:5}, set $a=1$, $b=1$, $k_1 \neq 0$, $k_2 \neq 0$ and  $Y_0:=S^2 \times S^2$. We can easily see that $H^j(M;\mathbb{Z})$ is free and of rank $3$ for $j=2,3,4,5$. We cannot take a
submodule of rank $2$ of $H^2(M;\mathbb{Z})$ consisting of elements such that the squares are zero.

In Theorem \ref{thm:1} under the constraint that the matrix $(h_{i,j})$ is the zero matrix, we consider a $7$-dimensional closed and simply-connected manifold such that $H^2(M;\mathbb{Z})$ is free and of rank $3$. Products of elements in $A \oplus \{0\} \subset H^2(M;\mathbb{Z})$ and products of elements in $\{0\} \oplus B \subset H^2(M;\mathbb{Z})$ vanish. This implies that we can take a
submodule of rank $2$ of $H^2(M;\mathbb{Z})$ consisting of elements such that the squares are zero.

This completes the proof.
\end{proof}
\section{Acknowledgement.}
\label{sec:3}
\thanks{The author is a member of the project JSPS KAKENHI Grant Number JP17H06128 "Innovative research of geometric topology and singularities of differentiable mappings"
(https://kaken.nii.ac.jp/en/grant/KAKENHI-PROJECT-17H06128/).

 Independently, this work was supported by "The Sasakawa Scientific Research Grant" (2020-2002 : https://www.jss.or.jp/ikusei/sasakawa/).}

\end{document}